\documentclass[12pt]{amsart}
\usepackage{hyperref} 
\usepackage{amsmath, amsthm, amssymb}
\usepackage{hyperref} 
\usepackage{enumerate}
\usepackage{verbatim}
\usepackage{caption}
\usepackage{subcaption}
\usepackage{esint}
\usepackage[T1]{fontenc}

\usepackage{tikz,amsthm,amsmath,amstext,amssymb,amscd,epsfig,euscript, mathrsfs,
 dsfont,pspicture,
multicol,graphpap,graphics,graphicx,times,enumerate,
sidecap,
wrapfig,color,pict2e}
\usepackage{setspace}

% 1 inch margins
\addtolength{\oddsidemargin}{-0.875in}
\addtolength{\evensidemargin}{-0.875in}
\addtolength{\textwidth}{1.75in}

\numberwithin{equation}{section}

\title{Quantitative straightening of distance spheres}
%\author{Guy C. David, McKenna Kaczanowski, and Dallas Pinkerton}
\date{\today}
\author{Guy C. David}
\address{Department of Mathematical Sciences\\ Ball State University, Muncie, IN 47306}
\email{gcdavid@bsu.edu}
\author{McKenna Kaczanowski}
\address{Department of Mathematical Sciences\\ Ball State University, Muncie, IN 47306}
\email{mskaczanowski@outlook.com}
\author{Dallas Pinkerton}
\address{Department of Mathematical Sciences\\ Ball State University, Muncie, IN 47306}
\email{dcpinkerton@bsu.edu}

\thanks{G.~ C.~ David was partially supported by the National Science Foundation under Grants No. DMS-1758709 and DMS-2054004.}
\subjclass[2020]{28A75}

\begin{document}
%\maketitle

%Theorems
\theoremstyle{plain}
\newtheorem{theorem}{Theorem}
\newtheorem{exercise}{Exercise}
\newtheorem{corollary}[theorem]{Corollary}
\newtheorem{scholium}[theorem]{Scholium}
\newtheorem{claim}[theorem]{Claim}
\newtheorem{lemma}[theorem]{Lemma}
\newtheorem{sublemma}[theorem]{Lemma}
\newtheorem{proposition}[theorem]{Proposition}
\newtheorem{conjecture}[theorem]{Conjecture}

\theoremstyle{definition}
\newtheorem{fact}[theorem]{Fact}
\newtheorem{example}[theorem]{Example}
\newtheorem{definition}[theorem]{Definition}
\newtheorem{remark}[theorem]{Remark}
\newtheorem{question}[theorem]{Question}

\numberwithin{equation}{section}
\numberwithin{theorem}{section}

\newcommand{\cG}{\mathcal{G}}
\newcommand{\RR}{\mathbb{R}}
\newcommand{\HH}{\mathcal{H}}
\newcommand{\LIP}{\textnormal{LIP}}
\newcommand{\Lip}{\textnormal{Lip}}
\newcommand{\Tan}{\textnormal{Tan}}
\newcommand{\length}{\textnormal{length}}
\newcommand{\dist}{\textnormal{dist}}
\newcommand{\diam}{\textnormal{diam}}
\newcommand{\vol}{\textnormal{vol}}
\newcommand{\rad}{\textnormal{rad}}
\newcommand{\side}{\textnormal{side}}

%Alphabets
\def\bA{{\mathbb{A}}}
\def\bB{{\mathbb{B}}}
\def\bC{{\mathbb{C}}}
\def\bD{{\mathbb{D}}}
\def\bR{{\mathbb{R}}}
\def\bS{{\mathbb{S}}}
\def\bO{{\mathbb{O}}}
\def\bE{{\mathbb{E}}}
\def\bF{{\mathbb{F}}}
\def\bH{{\mathbb{H}}}
\def\bI{{\mathbb{I}}}
\def\bT{{\mathbb{T}}}
\def\bZ{{\mathbb{Z}}}
\def\bX{{\mathbb{X}}}
\def\bP{{\mathbb{P}}}
\def\bN{{\mathbb{N}}}
\def\bQ{{\mathbb{Q}}}
\def\bK{{\mathbb{K}}}
\def\bG{{\mathbb{G}}}

\def\nrj{{\mathcal{E}}}
\def\cA{{\mathscr{A}}}
\def\cB{{\mathscr{B}}}
\def\cC{{\mathscr{C}}}
\def\cD{{\mathscr{D}}}
\def\cE{{\mathscr{E}}}
\def\cF{{\mathscr{F}}}
\def\cB{{\mathscr{G}}}
\def\cH{{\mathscr{H}}}
\def\cI{{\mathscr{I}}}
\def\cJ{{\mathscr{J}}}
\def\cK{{\mathscr{K}}}
\def\Layer{{\rm Layer}}
\def\cM{{\mathscr{M}}}
\def\cN{{\mathscr{N}}}
\def\cO{{\mathscr{O}}}
\def\cP{{\mathscr{P}}}
\def\cQ{{\mathscr{Q}}}
\def\cR{{\mathscr{R}}}
\def\cS{{\mathscr{S}}}
\def\Up{{\rm Up}}
\def\cU{{\mathscr{U}}}
\def\cV{{\mathscr{V}}}
\def\cW{{\mathscr{W}}}
\def\cX{{\mathscr{X}}}
\def\cY{{\mathscr{Y}}}
\def\cZ{{\mathscr{Z}}}

  \def\del{\partial}
  \def\diam{{\rm diam}}
	\def\VV{{\mathcal{V}}}
	\def\FF{{\mathcal{F}}}
	\def\QQ{{\mathcal{Q}}}
	\def\BB{{\mathcal{B}}}
	\def\XX{{\mathcal{X}}}
	\def\PP{{\mathcal{P}}}

  \def\del{\partial}
  \def\diam{{\rm diam}}
	\def\image{{\rm Image}}
	\def\domain{{\rm Domain}}
  \def\dist{{\rm dist}}
	\newcommand{\Gr}{\mathbf{Gr}}
\newcommand{\md}{\textnormal{md}}
\newcommand{\vspan}{\textnormal{span}}

\begin{abstract}
    We study ``distance spheres'': the set of points lying at constant distance from a fixed arbitrary subset $K$ of $[0,1]^d$. We show that, away from the regions where $K$ is ``too dense'' and a set of small volume, we can decompose $[0,1]^d$ into a finite number of sets on which the distance spheres can be ``straightened'' into subsets of parallel $(d-1)$-dimensional planes by a bi-Lipschitz map. Importantly, the number of sets and the bi-Lipschitz constants are independent of the set $K$.
\end{abstract}

\maketitle

\section{Introduction}

Let $K$ be an arbitrary set in $\RR^d$ and $r\geq 0$. The set of all points whose distance from $K$ is equal to $r$ forms a new set that we call a ``distance sphere'', and denote $S_K(r)$. (A precise definition is given below; in fact, we will focus our attention on the unit cube of $\RR^d$ rather than the whole space.)

If $K$ consists of a single point, then $S_K(r)$ is simply the sphere of radius $r$ centered on $K$. If $K$ is a general set, the distance spheres may be rather complicated objects, whose structure may change wildly as $r$ varies. Figures \ref{fig:finite} and \ref{fig:cantor} below depict some examples. These sets have been studied (under different names) by many authors, e.g., \cite{Brown, Ferry, Fu, VellisWu}.

This paper is concerned with the geometric structure of distance spheres, from a quantitative perspective. Our goal is to find large subsets of $\RR^d$ on which all the distance spheres can be simultaneously ``straightened out'' into (subsets of) parallel $(d-1)$-dimensional planes by a global mapping with controlled distortion. Moreover, we control the number of subsets and the distortion of the ``straightening map'' by constants that depend on the dimension $d$ but are otherwise independent of the set $K$.

In order to accomplish this, we must ``throw away'' some pieces of the domain on which we cannot straighten the distance spheres. These pieces come in two types: one a piece of small $d$-dimensional volume, and one the union of all locations where the set $K$ is ``too dense''. These are defined precisely below, and our main theorem is then stated as Theorem \ref{thm:main}.

The main tools in our arguments are the results of \cite{AzzamSchul} and \cite{DavidSchul} for general Lipschitz functions, combined with an analysis of the ``mapping content'' defined in \cite{AzzamSchul} in the special case of the distance function $\dist(\cdot, K)$.

\subsection{Main definitions and results}

\begin{definition}
Let $K\subseteq [0,1]^d$ be a set. For $r\geq 0$, the \textit{distance spheres} for $K$ are the sets
$$ S_K(r) = \{x\in [0,1]^d: \dist(x,K)=r\}.$$
\end{definition}

\begin{figure}
     \centering
     \begin{subfigure}[b]{0.3\textwidth}
         \centering
         \includegraphics[width=\textwidth]{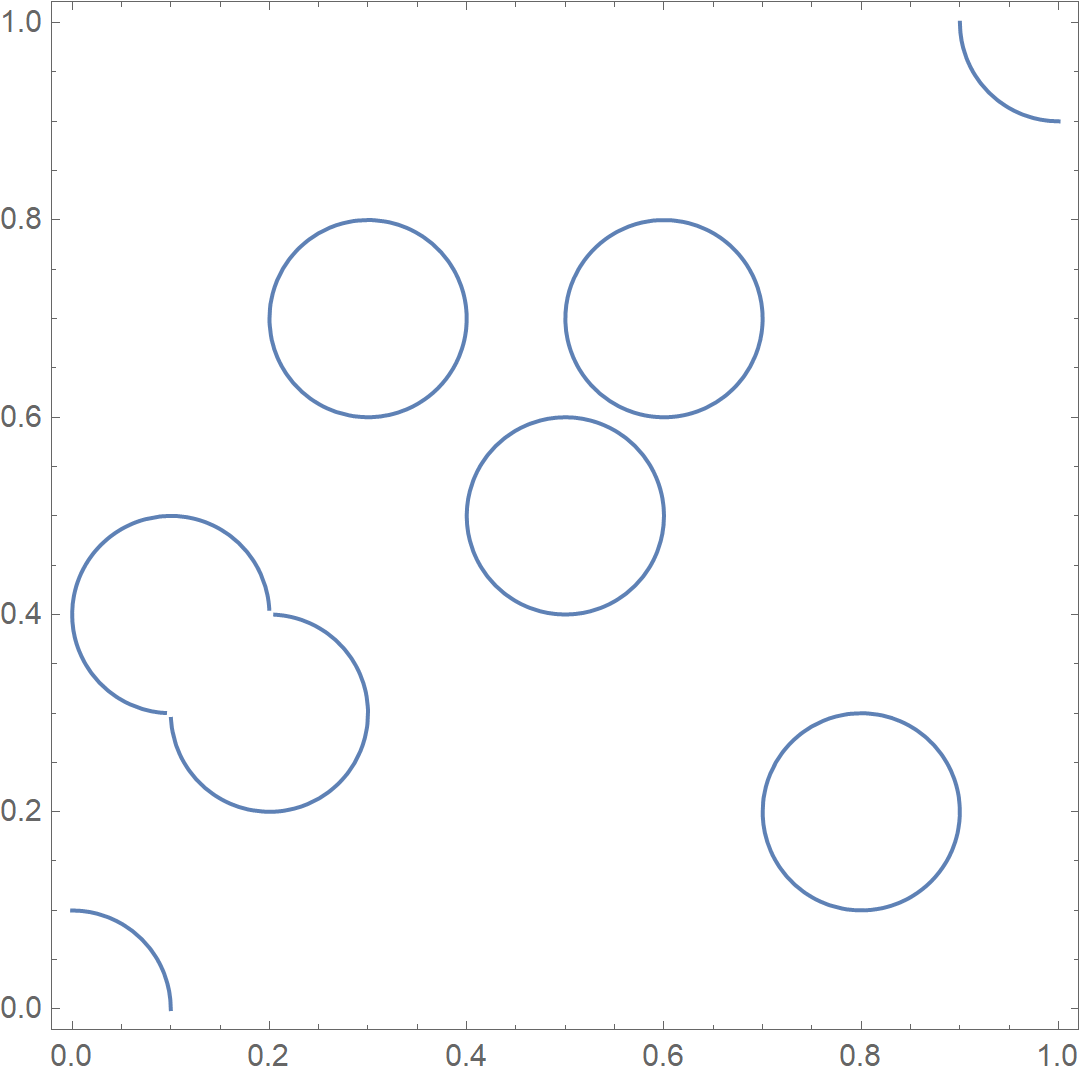}
     \end{subfigure}
     \hfill
     \begin{subfigure}[b]{0.3\textwidth}
         \centering
         \includegraphics[width=\textwidth]{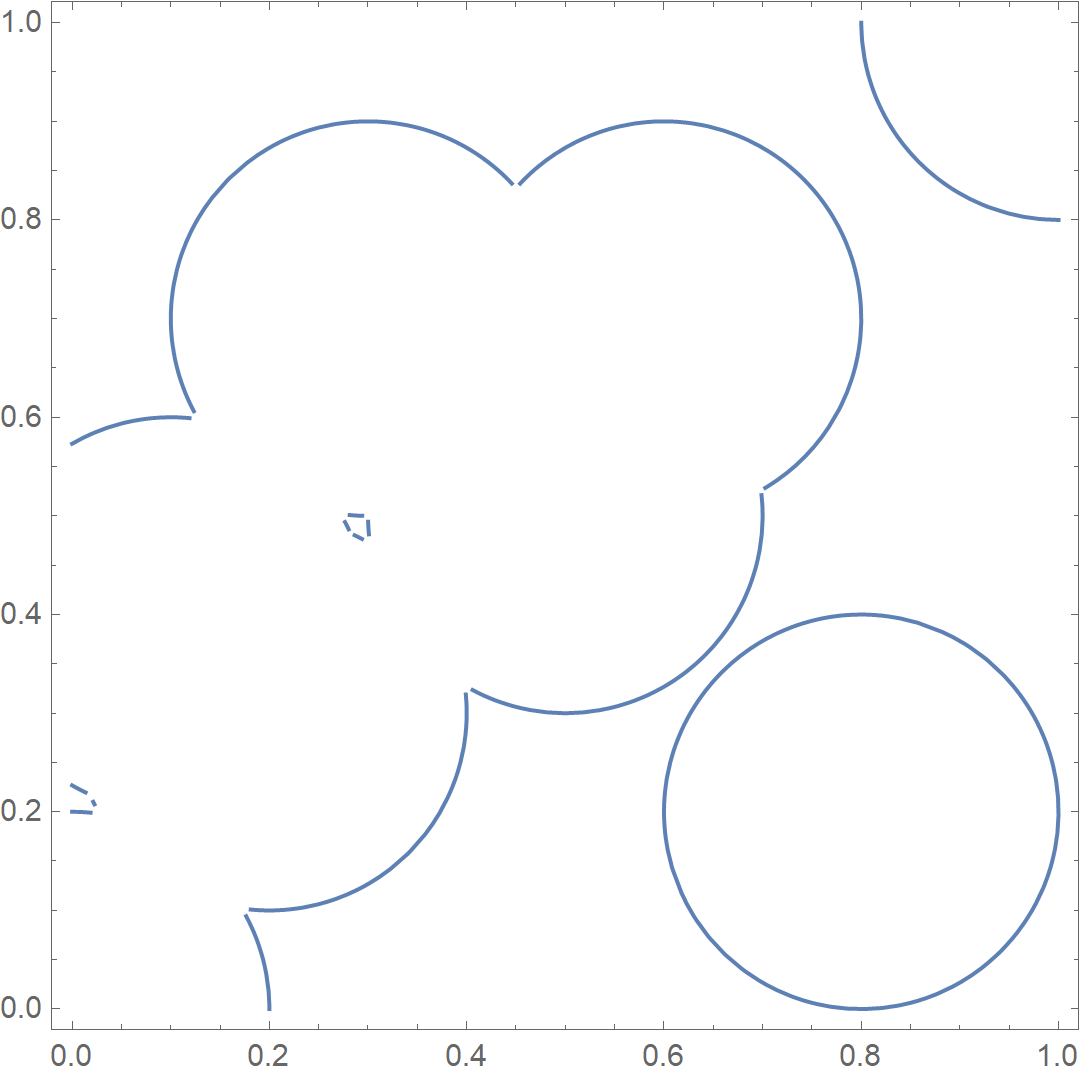}
     \end{subfigure}
     \hfill
     \begin{subfigure}[b]{0.3\textwidth}
         \centering
         \includegraphics[width=\textwidth]{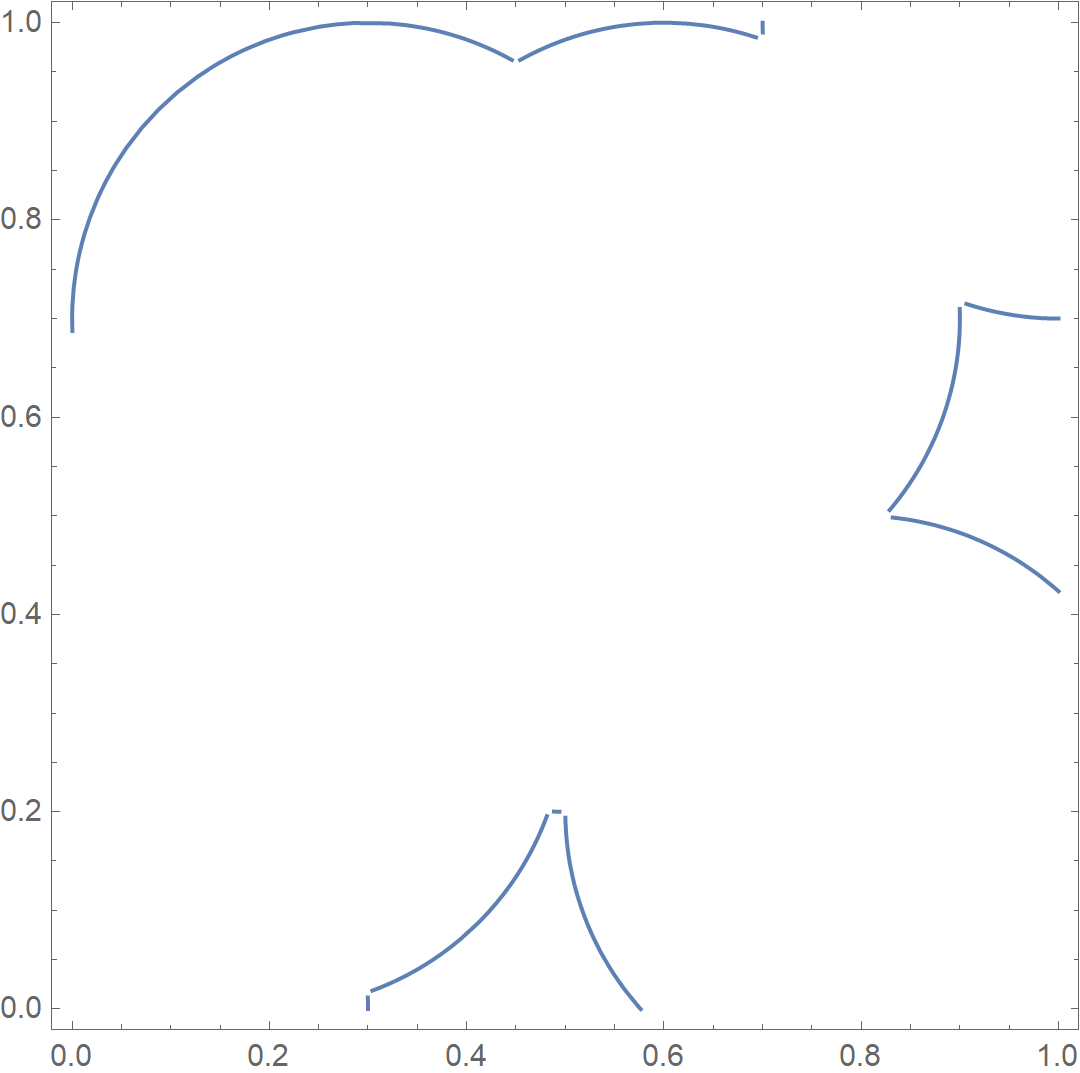}
     \end{subfigure}
        \caption{Examples of distance spheres $S_K(r)$ for a fixed finite set $K\subseteq [0,1]^2$ and three different values of $r$.}
        \label{fig:finite}
\end{figure}

\begin{figure}
     \centering
     \begin{subfigure}[b]{0.3\textwidth}
         \centering
         \includegraphics[width=\textwidth]{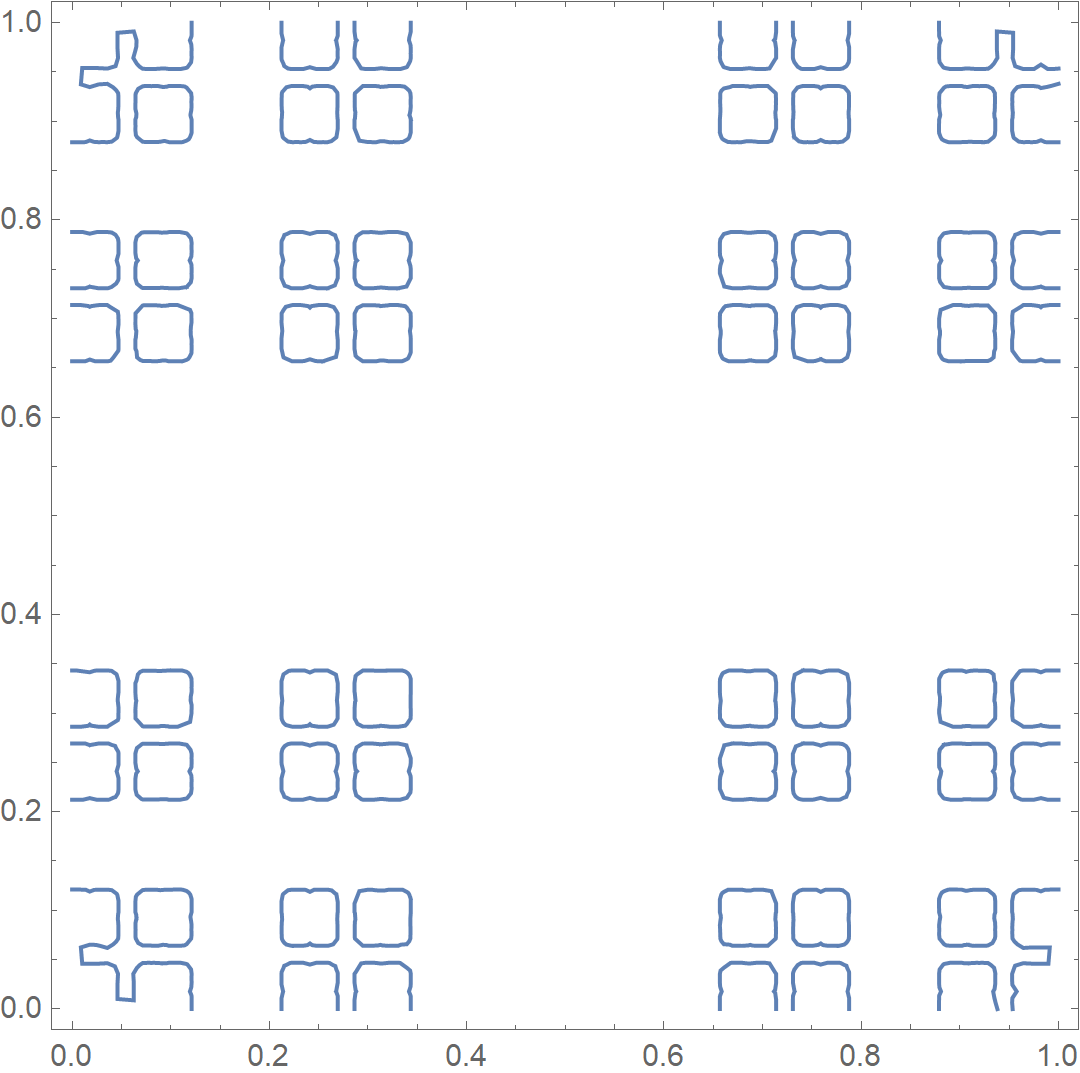}
     \end{subfigure}
     \hfill
     \begin{subfigure}[b]{0.3\textwidth}
         \centering
         \includegraphics[width=\textwidth]{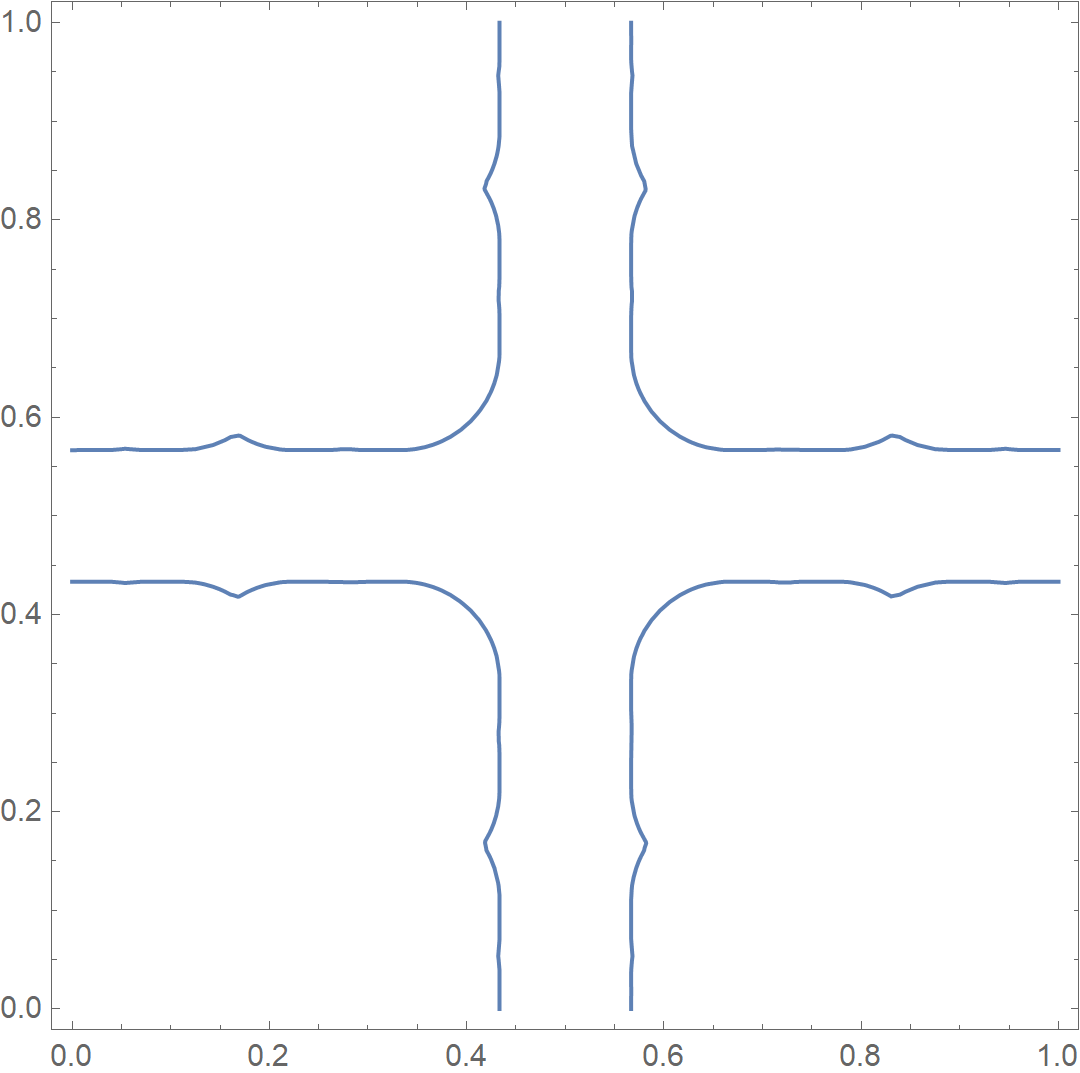}
     \end{subfigure}
     \hfill
     \begin{subfigure}[b]{0.3\textwidth}
         \centering
         \includegraphics[width=\textwidth]{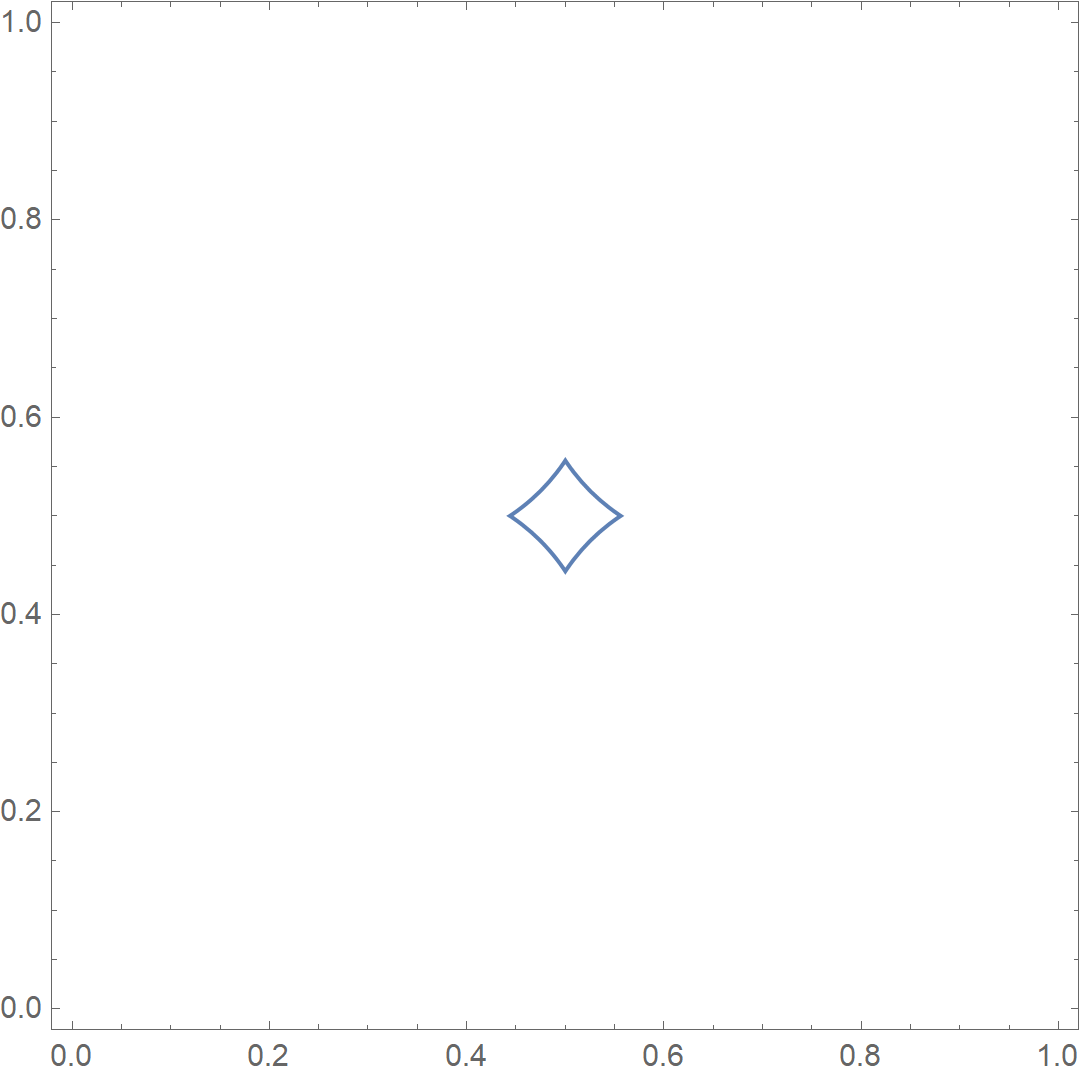}
     \end{subfigure}
        \caption{Examples of distance spheres $S_K(r)$ for three different values of $r$ and a fixed set $K\subseteq [0,1]^2$ that is an approximation of a Cantor set.}
        \label{fig:cantor}
\end{figure}

\begin{definition}
Let $K\subseteq [0,1]^d$ be a set. A set $E\subseteq [0,1]^d$ is called \textit{$K$-straightenable} if there is a bi-Lipschitz map $$g\colon\RR^d\rightarrow\RR^d$$
and an injective function
$$ \phi\colon \{r\geq 0: S_K(r)\cap E \neq \emptyset\} \rightarrow \RR$$
such that
\begin{equation}\label{eq:sc1}
g(S_K(r) \cap E) = \left(\{\phi(r)\} \times \RR^{d-1}\right) \cap g(E) \text{ for all } r\text{ such that } S_K(r) \cap E \neq \emptyset.
\end{equation}
\end{definition}

In other words, $g$ simultaneously ``straightens'' all the sets $S_K(r)\cap E$ into (subsets of) distinct vertical $(d-1)$-dimensional planes.

\begin{example}
If $K=\{\left(0,0\right)\}\subseteq [0,1]^2$, then the set
$$ E = \{ (x,y) \in [0,1]^2 : \frac{1}{2} \leq \sqrt{x^2+y^2} \leq 1 \}$$
is an example of a $K$-straightenable set. (See Figure \ref{fig:example}.) Since $K$ is a single point, the distance spheres $S_K(r)$ are simply arcs of circles. The map
$$ g(x,y) = (\sqrt{x^2+y^2}, \arctan(y/x)),$$
i.e., the map that converts rectangular to polar coordinates, straightens out the distance spheres $S_K(r) \cap E$ into distinct vertical line segments $\left(\{\phi(r)\}\times \RR\right) \cap g(E)$, where we simply take $\phi(r)=r$. One can show that $g$ is bi-Lipschitz on $E$ and extends to a bi-Lipschitz map from $\RR^2$ to $\RR^2$. Note that, while in this example $E$ is the closure of a simple open domain, we do not require this in general.

\begin{figure}
     \centering
     \begin{subfigure}[b]{0.4\textwidth}
         \centering
         \includegraphics[width=\textwidth]{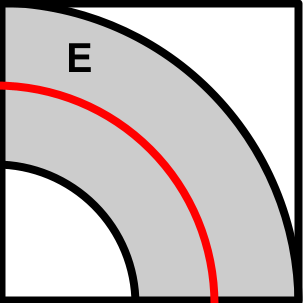}
         \caption{The set $E$ with a marked distance sphere (circle) $S_k(r)$ in red.}
     \end{subfigure}
     \hfill
     \begin{subfigure}[b]{0.4\textwidth}
         \centering
         \includegraphics[width=\textwidth]{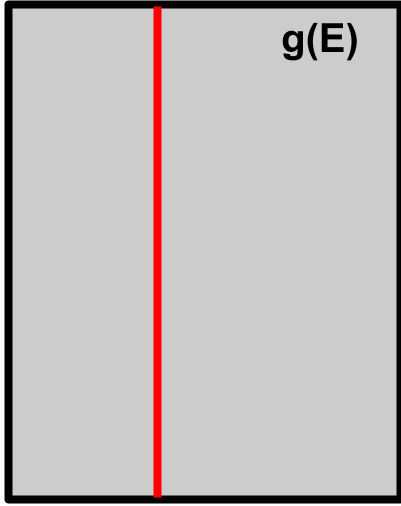}
           \caption{The set $g(E)$ with straightened $g(S_k(r))$ in red.}
     \end{subfigure}

        \caption{A simple example of a straightenable set when $K$ is the one-point set $\{(0,0)\}$.}
        \label{fig:example}
\end{figure}

\end{example}

\begin{definition}
Let $K\subseteq [0,1]^d$ be a set and $\epsilon>0$. We define
$$ \QQ(K,\epsilon) = \{ \text{ dyadic cubes } Q: N_{\epsilon\side(Q)}(K\cap Q) \supseteq Q\}$$
and
$$ D_\epsilon(K) := \cup_{Q\in\QQ(K,\epsilon)} Q.$$
Here $N_\eta(E)$ refers to the open $\eta$-neighborhood of a set $E$; see section \ref{sec:prelims}. In other words, $D_\epsilon(K)$ is the union of all dyadic cubes $Q$ in which $K\cap Q$ is $\epsilon\side(Q)$-dense.
\end{definition}

\begin{theorem}\label{thm:main}
Let $K\subseteq [0,1]^d$ be a set and $\epsilon>0$. Then we can write
$$ [0,1]^d  = E_1 \cup \dots \cup E_M \cup D_{\epsilon}(K) \cup G,$$
where each $E_i$ is $K$-straightenable and $|G|<\epsilon$. 

Moreover, the number of straightenable sets $M$ and the associated bi-Lipschitz constants depend only on $\epsilon$ and $d$. In particular, they do not depend on the set $K$.
\end{theorem}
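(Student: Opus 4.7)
The plan is to reduce the theorem to a general bi-Lipschitz decomposition for $1$-Lipschitz functions, and then handle the extra structure specific to the distance function. The starting observation is that the distance spheres $S_K(r)$ are the level sets of the $1$-Lipschitz function $f(x):=\dist(x,K)\colon [0,1]^d \to \RR$, and that a set $E \subseteq [0,1]^d$ is $K$-straightenable exactly when $f|_E$ admits a bi-Lipschitz extension $g\colon\RR^d\to\RR^d$ whose first coordinate is $\phi \circ f$ for some injection $\phi$. Under this reformulation, Theorem~\ref{thm:main} becomes a bi-Lipschitz decomposition statement for the particular Lipschitz function $f = \dist(\cdot,K)$, exactly the sort of conclusion the machinery of \cite{AzzamSchul, DavidSchul} is designed to produce.

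The first step is to extract from \cite{AzzamSchul, DavidSchul} a statement of the following shape: for any $1$-Lipschitz $f\colon [0,1]^d \to \RR$ and any $\epsilon,\delta > 0$, there exist $M = M(d,\epsilon,\delta)$ and $L = L(d,\epsilon,\delta)$ such that
\[ [0,1]^d = E_1 \cup \cdots \cup E_M \cup G \cup B, \]
where each $E_i$ is straightenable for $f$ with bi-Lipschitz constant at most $L$, $|G|<\epsilon$, and $B$ is the union of those dyadic cubes $Q$ for which the mapping content $\md(f,Q)$ is small compared to $\side(Q)$. This is essentially what the ``big pieces of bi-Lipschitz images'' technology of \cite{AzzamSchul, DavidSchul} delivers when applied to a Lipschitz function, with all constants depending only on $d, \epsilon, \delta$.

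The key and hardest step is then to establish the quantitative lower bound
\[ \md(f,Q) \geq \delta(\epsilon,d)\, \side(Q) \quad \text{for every dyadic cube } Q \notin \QQ(K,\epsilon), \]
where $\delta(\epsilon,d)>0$ depends only on $\epsilon$ and $d$; selecting $\delta = \delta(\epsilon,d)$ in the first step then forces $B \subseteq D_\epsilon(K)$, and the theorem follows. To prove this bound, I would pick $y \in Q$ with $\dist(y, K \cap Q) \geq \epsilon \side(Q)$ and let $k^{\ast} \in K$ realize $f(y) = \dist(y,K)$. If $k^{\ast} \in Q$, then $f(y) \geq \epsilon \side(Q)$ while $f$ vanishes at $k^{\ast}$, so $f$ has oscillation at least $\epsilon \side(Q)$ on $Q$. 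If $k^{\ast} \notin Q$, then starting from a suitably chosen point of $Q$ and moving along a segment in $Q$ pointing roughly away from $k^{\ast}$, $f$ grows at a uniformly positive rate, and a pigeonhole/subdivision argument over sub-cubes of $Q$ produces such a segment of length comparable to $\side(Q)$. Either way, $f$ has oscillation at least $c(\epsilon,d) \side(Q)$ on $Q$. The main obstacle will be the passage from oscillation to mapping content: $\md$ is a Hausdorff-content-type quantity rather than literally oscillation, and since $f$ has codimension-one image while \cite{AzzamSchul} is framed primarily for equidimensional maps, some care is required either to adapt the definition directly or to invoke the refinements of \cite{DavidSchul}.
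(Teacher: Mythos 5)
Your outline follows essentially the same route as the paper: apply the David--Schul Hard Sard decomposition to $f = \dist(\cdot, K)$ with $n=1$, $m=d-1$, note that condition (i) of the Hard Sard definition is exactly $K$-straightenability, and then argue that cubes with small mapping content already lie in $D_\epsilon(K)$. However, your sketch of the key lower bound (the paper's Lemma \ref{cubelemma}) has a genuine gap in the case $k^\ast \notin Q$: moving ``away from $k^\ast$'' does \emph{not} force $f = \dist(\cdot, K)$ to grow at a positive rate, since you may simultaneously be moving toward other points of $K$. The paper's argument goes in the opposite direction: for a point $x$ in the $\delta\side(Q)$-interior of $Q$ whose nearest point $z'\in\overline K$ lies outside $Q$, one moves from $x$ \emph{toward} $z'$; by Lemma \ref{lem:segment} the distance function decreases at rate exactly $1$ along that segment, so $f$ drops by at least $\delta\side(Q)$ before the segment exits $Q$, giving oscillation at least $\delta\side(Q)$ on $Q$. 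The interior/boundary split (with $c_d = \sqrt d + 1$ absorbing the boundary annulus) then replaces your unspecified ``pigeonhole/subdivision argument.''

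The ``main obstacle'' you flag---passing from oscillation to mapping content---turns out not to be an obstacle at all once one observes that $f(Q)$ is a compact interval (the continuous image of the compact connected set $Q$), so that $\HH^1_\infty(f(Q))$ \emph{equals} the oscillation of $f$ on $Q$ by Lemma \ref{lem:intervalcontent}. Combined with the elementary decomposition of any set of small $\HH^{1,d-1}_\infty(f,\cdot)$ into a small-volume piece plus cubes $Q_i$ with small $\HH^1_\infty(f(Q_i))$ (Lemma \ref{rewritelemma}), and the fact that Theorem \ref{thm:davidschul} is already stated for general $(n,m)$ with $n=1$ permitted (so the codimension-one concern you raise does not arise), this closes the proof. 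In short: right plan, but the two places you flagged as uncertain are precisely where the real work is, and the escape-the-cube direction in your segment argument must be reversed.
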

In this result, $|G|$ refers to the $d$-dimensional volume (Lebesgue measure) of the set $G$; see section \ref{sec:prelims} for notation.

We emphasize that a large part of our interest in Theorem \ref{thm:main} lies in the fact that, in our decomposition, the number of straightenable sets and their associated constants are independent of the starting set $K$.

While Theorem \ref{thm:main} applies to arbitrary sets $K\subseteq [0,1]^d$, we also prove a stronger corollary for a specific class of sets known as \textit{porous sets}. A set $K\subseteq \RR^d$ is \textit{porous} if there is a constant $c>0$ such that, for each $r>0$ and $p\in \RR^d$, the ball $B(p,r)$ contains a ball $B(q,cr)$ that is disjoint from $K$. Many classical fractals, such as the Cantor set and Sierpi\'nski carpet, are porous. More discussion of porous sets can be found, e.g., in \cite[Ch. 5]{TysonMackay}.

If $K$ is a porous set, then we can decompose the entirety of $[0,1]^d$, outside of a set of small measure, into $K$-straightenable sets:

\begin{corollary}\label{cor:porous}
Let $K\subseteq [0,1]^d$ be a porous set with constant $c$. Let $0<\epsilon<c/2$.  Then we can write
$$ [0,1]^d = E_1 \cup \dots \cup E_M \cup G,$$
where each $E_i$ is $K$-straightenable and $|G|<\epsilon$. 

The number of straightenable sets $M$ and the associated bi-Lipschitz constants depend only on $\epsilon$ and $d$, and not on the set $K$.
\end{corollary}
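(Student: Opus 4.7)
The plan is to deduce the corollary directly from Theorem \ref{thm:main} by showing that, under the porosity hypothesis with $\epsilon<c/2$, the ``too dense'' set $D_\epsilon(K)$ is in fact empty. Once this is verified, Theorem \ref{thm:main} immediately furnishes the desired decomposition $[0,1]^d = E_1 \cup \dots \cup E_M \cup G$, and the claimed dependence of $M$ and of the bi-Lipschitz constants on $\epsilon$ and $d$ alone carries over verbatim.

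The main step is therefore to verify that $\QQ(K,\epsilon)=\emptyset$. Let $Q$ be an arbitrary dyadic cube with center $p$ and side length $s$. The inscribed ball $B(p,s/2)$ lies inside $Q$, and by porosity it contains a ball $B(q,cs/2)$ disjoint from $K$. In particular, $q\in Q$ and $\dist(q,K)\geq cs/2$. Since $\epsilon<c/2$, we have $cs/2>\epsilon s=\epsilon\side(Q)$, so $q\notin N_{\epsilon\side(Q)}(K)$; because $K\cap Q\subseteq K$, this yields $q\notin N_{\epsilon\side(Q)}(K\cap Q)$. Hence $Q\not\subseteq N_{\epsilon\side(Q)}(K\cap Q)$, so $Q\notin\QQ(K,\epsilon)$.

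As $Q$ was arbitrary, $\QQ(K,\epsilon)=\emptyset$ and so $D_\epsilon(K)=\emptyset$, after which Theorem \ref{thm:main} gives the conclusion. I do not foresee any substantive obstacle here: porosity is precisely the negation of the ``too dense'' condition at every scale, so it guarantees a $K$-free ball of comparable size inside every dyadic cube, which rules out membership in $\QQ(K,\epsilon)$. The entire content of the corollary is then absorbed into the main theorem.
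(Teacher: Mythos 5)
Your proposal is correct and follows essentially the same route as the paper: apply porosity to the inscribed ball $B(p,\side(Q)/2)$ of an arbitrary dyadic cube $Q$ to produce a point $q\in Q$ with $\dist(q,K)\geq c\,\side(Q)/2>\epsilon\,\side(Q)$, conclude $\QQ(K,\epsilon)=\emptyset$ so $D_\epsilon(K)=\emptyset$, and then invoke Theorem \ref{thm:main}. The only cosmetic difference is that you spell out the inclusion $K\cap Q\subseteq K$ when passing from $N_{\epsilon\side(Q)}(K)$ to $N_{\epsilon\side(Q)}(K\cap Q)$, which the paper leaves implicit.
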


\subsection*{Acknowledgments}
The first named author would like to thank Raanan Schul for helpful conversations at an early state of this project.

\section{Notation and preliminaries}\label{sec:prelims}

\subsection{Basics}
We use the following basic definitions. A function $f$ from a metric space $(X,d_X)$ to a metric space $(Y,d_Y)$ is called \textit{Lipschitz} (or \textit{$L$-Lipschitz} to emphasize the constant) if there is a constant $L$ such that
$$ d_Y(f(x), f(x')) \leq Ld_X(x,x') \text{ for all } x,x'\in X.$$
It is called \textit{bi-Lipschitz} (or \textit{$L$-bi-Lipschitz}) if
$$ L^{-1}d_X(x, x') \leq d_Y(f(x), f(x')) \leq Ld_X(x,x') \text{ for all } x,x'\in X.$$

We use $B(x,r)$ to denote an open ball of radius $r$ centered at $x$ in a metric space, and $\overline{B}(x,r)$ for the corresponding closed ball.

The distance from a point $p$ to a set $K$ in $\RR^d$ is defined as
$$ \dist(p, K) := \inf\{|p-q| : q\in K\}.$$
If $K$ is a set in $\RR^d$ and $\eta>0$, then $N_\eta(K)$ is the open $\eta$-neighborhood of $K$, defined as
$$ N_\eta(K) = \{p\in \RR^d : \dist(p,K)<\eta.\}$$

In $\RR^d$, we will also use the collection of \textit{dyadic cubes}. These consist of all cubes $Q$ in $\RR^d$ of the form
$$ [a_1 2^n, (a_1+1)2^n] \times \dots \times [a_d 2^n, (a_d+1)2^n], $$
where $a_1, \dots, a_d$ and $n$ are integers.

\subsection{Measure, Hausdorff content, and mapping content}

We use $|E|$ to denote the $d$-dimensional volume (Lebesgue measure) of a set in $\RR^d$.

\begin{definition}
Let $E$ be a subset of a metric space $X$, and $k\geq 0$. The \textit{$k$-dimensional Hausdorff content of $E$} is defined by
$$ \HH^k_\infty(E) = \inf_{\mathcal{B}} \sum_{B\in\mathcal{B}} \diam(B)^k,$$
where the infimum is taken over all finite or countable collections of closed balls $\mathcal{B}$ whose union contains $E$.
\end{definition}

The following definition appears first in \cite{AzzamSchul}.
\begin{definition}
Let $f\colon [0,1]^{n+m}\rightarrow Y$ be a function into a metric space, and let $A\subseteq [0,1]^{n+m}$. The \textit{$(n,m)$-mapping content of $f$ on $A$} is:
$$ \HH^{n,m}_\infty(f,A) = \inf_{\mathcal{Q}} \sum_{Q\in \mathcal{Q}} \HH^n_\infty(f(Q))\side(Q)^m,$$
where the infimum is taken over all collections of dyadic cubes $\mathcal{Q}$ in $[0,1]^{n+m}$ whose union contains $A$.
\end{definition}

\subsection{Hard Sard sets}

The following definition was first introduced in \cite{AzzamSchul}. We present the slightly altered version from \cite[Definition 1.3]{DavidSchul}.
\begin{definition}\label{def:HSpair}
Let $n,m\geq 0$. Let $E\subseteq Q_0=[0,1]^{n+m}$ and $f\colon Q_0 \rightarrow X$ a Lipschitz mapping into a metric space. 

We call $E$ a \textbf{Hard Sard set for $f$} if there is a  constant $C_{Lip}$ and a $C_{Lip}$-bi-Lipschitz mapping $g\colon \RR^d \rightarrow \RR^d$ such that the following conditions hold.

Write $\RR^{n+m}=\RR^n \times \RR^m$ in the standard way, and points of $\RR^{n+m}$ as $(x,y)$ with $x\in \RR^n$ and $y\in \RR^m$. Let $F = f \circ g^{-1}$.

We ask that:
\begin{enumerate}[(i)]
\item\label{HS3} If $(x,y)$ and $(x',y')$ are in $g(E)$, then $F(x,y) = F(x',y')$ if and only if $x=x'$. Equivalently,
$$ F^{-1}(F(x,y)) \cap g(E) = (\{x\} \times \RR^m) \cap g(E)$$
\item\label{HS4} The map
$$(x,y) \mapsto (F(x,y),y)$$
is $C_{Lip}$-bi-Lipschitz on the set $g(E)$. 
\end{enumerate}
\end{definition}
Only condition (i) of the definition of Hard Sard set will play a role in this paper.

A slightly simplified version of the main theorem of \cite{DavidSchul} is the following:
\begin{theorem}\label{thm:davidschul}
Let $Q_0$ be the unit cube in $\RR^{n+m}$ and let $f\colon Q_0\rightarrow \RR^n$ be a $1$-Lipschitz map.

Given any $\gamma>0$, we can write
$$ Q_0 = E_1 \cup \dots \cup E_M \cup G,$$
where $E_i$ are Hard Sard sets and
$$ \HH^{n,m}_\infty(f,G) < \gamma.$$
The constant $M$ and the constants $C_{Lip}$ associated to the Hard Sard sets $E_i$ depend only on $n$, $m$, and $\gamma$.
\end{theorem}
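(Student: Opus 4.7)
The plan is to follow a corona / stopping-time strategy in the spirit of the Azzam--Schul framework that originally produced the notion of a Hard Sard set. The goal is to partition (most of) $Q_0$ into a bounded number of regions on which $f$ is well-approximated by a surjective affine map with a nearly constant linear part, and then construct an explicit bi-Lipschitz chart $g$ that straightens $f$ on each such region.

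The first step is to associate to each dyadic cube $Q\subseteq Q_0$ an affine map $A_Q:\RR^{n+m}\to\RR^n$ whose linear part has rank $n$ and which minimizes the normalized $L^\infty$ error
\[
\beta_f(Q):= \side(Q)^{-1}\sup_{x\in Q}|f(x)-A_Q(x)|.
\]
Fix a small parameter $\eta=\eta(n,m,\gamma)$. Starting from $Q_0$ as a root, descend through descendants as long as $\beta_f(Q)<\eta$ and the linear part of $A_Q$ remains within $\eta$ of that of the root; when either condition fails, close off the current tree and start a new one rooted at the offending cube. Each tree $T$ then has a coherent shadow $E_T$ on which $f$ is close to a fixed affine map, with kernel a fixed $m$-plane $W_T$.

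On each $E_T$, I would construct $g_T$ by a Whitney-type flow in the $W_T$-direction that rectifies the level sets $\{f=c\}\cap E_T$ (approximate graphs over $W_T$) into genuine $W_T$-fibers, glued from local corrections on each cube of $T$. Condition~(\ref{HS3}) of Definition~\ref{def:HSpair} then holds by design, and (\ref{HS4}) follows by tracking Lipschitz constants across the gluing. To collapse a potentially enormous forest into $M=M(n,m,\gamma)$ Hard Sard sets, I would color trees so that same-colored trees have linear parts lying in a common $\eta$-ball in $\Gr(n,n+m)$ and admit compatible straightenings; compactness of the Grassmannian makes finitely many colors suffice.

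The exceptional set $G$ is the union of stopping cubes not absorbed into any tree's interior. A Carleson-type packing estimate, telescoping parent-to-child affine errors along the forest, should give $\HH^{n,m}_\infty(f,G)\lesssim\eta$, so choosing $\eta$ small enough against $\gamma$ forces the desired bound. The main obstacle is not the stopping-time bookkeeping but Step~2: producing a globally bi-Lipschitz chart $g_T$ whose constants depend only on $n,m,\eta$. Even though the affine approximations vary slowly inside a tree, assembling the local flattenings into a single bi-Lipschitz map on $\RR^{n+m}$ is the delicate heart of the argument — essentially the reason the Azzam--Schul and David--Schul theorems require serious work.
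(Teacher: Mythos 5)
This theorem is not proved in the present paper: it is quoted, in slightly simplified form, from \cite{DavidSchul}, which in turn builds on \cite{AzzamSchul}. So there is no ``paper's own proof'' to compare your attempt against; what you have written is a sketch of how one might re-derive a known, substantial theorem.

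Your outline does capture the broad corona/stopping-time flavor that the cited works use, and you are right that the serious work is in producing a \emph{global} bi-Lipschitz chart $g$ with constants depending only on $n,m,\gamma$. But what you have written is a plan, not a proof, and beyond the step you flag there are other genuine gaps. First, defining $\beta_f(Q)$ via a best affine approximation $A_Q$ ``whose linear part has rank $n$'' is already problematic: on cubes where $f$ is degenerate (e.g., nearly constant, or with image of dimension less than $n$) no rank-$n$ affine map approximates $f$ well, and you cannot simply demand one. You need a separate stopping condition detecting degeneracy, and --- crucially --- you must show that the cubes stopped for degeneracy have small \emph{mapping content} $\HH^{n,m}_\infty(f,\cdot)$, which is the whole point of that notion (small $\HH^n_\infty(f(Q))$ makes the term $\HH^n_\infty(f(Q))\side(Q)^m$ small). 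Your ``Carleson-type packing estimate'' addresses the rotation-stopping cubes, not the degenerate ones, and it is not explained how either bound translates into the specific quantity $\HH^{n,m}_\infty(f,G)$ rather than a Carleson norm or Lebesgue measure. Second, the claim that ``compactness of the Grassmannian'' yields $M=M(n,m,\gamma)$ colors does not by itself bound the number of Hard Sard pieces: two trees with nearly parallel planes $W_T$ need not admit a common straightening $g$, since $g$ must be a single bi-Lipschitz map on all of $\RR^{n+m}$ straightening \emph{all} level sets passing through that piece. Collapsing a forest into boundedly many Hard Sard sets requires additional combinatorial and geometric work beyond coloring by direction. In short: the skeleton is recognizable, but the argument as written would not compile into a proof, and you correctly suspect that it is the heart of \cite{AzzamSchul} and \cite{DavidSchul}.
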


\section{Lemmas}

\begin{lemma}\label{lem:distancefunction}
If $K$ is any set in $\RR^d$, the function 
$$ f(x) = \dist(x,K)$$
is $1$-Lipschitz.
\end{lemma}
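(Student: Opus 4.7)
The plan is to verify the 1-Lipschitz property directly from the definition of $\dist(\cdot, K)$ using the triangle inequality in $\mathbb{R}^d$. This is a textbook-style result, and the only subtlety is the passage from a pointwise inequality involving a generic $q \in K$ to one involving the infimum.

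First I would fix arbitrary $x, y \in \mathbb{R}^d$ and observe that for every $q \in K$, the triangle inequality gives
\[
|x - q| \leq |x - y| + |y - q|.
\]
Since this holds for every $q \in K$, I would take the infimum over $q \in K$ on the right-hand side (noting $|x-y|$ is independent of $q$) to conclude
\[
\dist(x, K) \leq |x - q| \quad \text{for the specific } q,
\]
but more usefully, infimizing the right side yields
\[
\dist(x, K) \leq |x - y| + \dist(y, K).
\]
Strictly speaking, one first fixes $q$ so that $|y - q| < \dist(y,K) + \delta$ for arbitrary $\delta > 0$, deduces $\dist(x,K) \leq |x-y| + \dist(y,K) + \delta$, and then lets $\delta \to 0$.

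Rearranging gives $\dist(x,K) - \dist(y,K) \leq |x - y|$, and swapping the roles of $x$ and $y$ yields $\dist(y,K) - \dist(x,K) \leq |x-y|$. Combining, $|f(x) - f(y)| \leq |x - y|$, which is the desired 1-Lipschitz bound.

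There is no real obstacle here; the only thing to be careful about is the standard $\delta$-argument when $K$ is not assumed closed (so the infimum in $\dist(y,K)$ need not be attained). Once that is handled, the proof is essentially two lines of triangle inequality plus a symmetry remark.
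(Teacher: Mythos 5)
Your proof is correct and uses essentially the same triangle-inequality argument as the paper; the only cosmetic difference is that you handle the possibly-unattained infimum via a $\delta$-approximation, whereas the paper selects a minimizing point $z_y$ in $\overline{K}$. Both routes are fine, and your version avoids having to justify that the infimum is attained on the closure.
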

\begin{proof}
Let $x,y \in \mathbb{R}^d$, and let $K \subseteq \mathbb{R}^d$. Without loss of generality, assume $f(x) \geq f(y)$. Let $z_y$ be a point in the closure of $K$ such that $\inf\{|y-z|:z \in K\} = |y-z_y|.$ Then $f(y) = \dist(y,K) = |y-z_y|$. Then applying the triangle inequality, we have $$\dist(x,K) = \inf\{|x-z|:z \in K\} \leq |x-z_y| \leq |x-y| + |y-z_y| = |x-y| + \dist(y,K).$$ Then $$\dist(x,K) - \dist(y,K) \leq |x-y|.$$ Thus, since $f(x) \geq f(y)$, $$|f(x) - f(y)| = |\dist(x,K) - \dist(y,K)| = \dist(x,K) - \dist(y,K) \leq |x-y|,$$ and so $f(x) = \dist(x,K)$ is $1$-Lipschitz.
\end{proof}

\begin{lemma}\label{lem:intervalcontent}
If $[a,b]$ is a compact interval in $\RR$, then $\HH^1_\infty([a,b])=b-a$.
\end{lemma}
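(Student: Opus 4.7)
The plan is to establish both inequalities $\HH^1_\infty([a,b]) \leq b-a$ and $\HH^1_\infty([a,b]) \geq b-a$ directly from the definition of Hausdorff content, using the fact that in $\RR$ closed balls are precisely compact intervals and their diameter equals their length.

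For the upper bound, I would use the trivial single-ball cover: take $\mathcal{B} = \{[a,b]\}$, which is a closed ball in $\RR$ centered at $(a+b)/2$ with radius $(b-a)/2$, and whose diameter is $b-a$. Since this is an admissible cover of $[a,b]$, the infimum defining $\HH^1_\infty([a,b])$ is at most $b-a$.

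For the lower bound, let $\mathcal{B} = \{B_i\}$ be an arbitrary finite or countable cover of $[a,b]$ by closed balls, and write $B_i = [c_i, d_i]$ so that $\diam(B_i) = d_i - c_i$. I would then invoke countable subadditivity of one-dimensional Lebesgue measure to obtain
\[
b - a = |[a,b]| \leq \sum_i |[c_i, d_i]| = \sum_i (d_i - c_i) = \sum_i \diam(B_i).
\]
Taking the infimum over all such covers $\mathcal{B}$ then gives $\HH^1_\infty([a,b]) \geq b-a$, completing the proof.

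I do not foresee a substantive obstacle here; the only point worth emphasizing is that closed balls in $\RR$ are exactly closed intervals, so each $B_i$ may be written in the form $[c_i, d_i]$ with $|[c_i, d_i]| = \diam(B_i)$. Once that identification is made, both inequalities are one-line verifications.
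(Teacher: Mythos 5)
Your proof is correct and follows essentially the same approach as the paper: the upper bound via the single-ball cover $\overline{B}\bigl(\tfrac{a+b}{2},\tfrac{b-a}{2}\bigr)=[a,b]$, and the lower bound via the standard fact that interval lengths covering $[a,b]$ sum to at least $b-a$ (which you justify by subadditivity of Lebesgue measure, while the paper simply cites it as a basic fact).
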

\begin{proof}
Notice that a closed ball in $\RR$ is just a closed interval $[a_{i},b_{i}]$. Then for an interval $[a,b]$, we have
$$\overline{B}\left(\frac{a+b}{2},\frac{b-a}{2}\right)=[a,b],$$
which implies $\HH^1_ \infty([a,b])\le \diam(\overline{B}(\frac{a+b}{2},\frac{b-a}{2}))=b-a$.

Now let $\{\overline{B}_{i} = [a_i,b_i]\}$ be a collection of closed balls that cover the interval $[a,b]$. Then 
$$\displaystyle \sum_{i}\diam(\overline{B}_{i})=\displaystyle \sum_{i}\diam([a_{i},b_{i}])\ge b-a,$$
where the inequality is a basic fact in measure theory. Taking the infimum of both sides we get $\HH^1_ \infty([a,b])\ge b-a$. Hence, $\HH^1_ \infty([a,b])=b-a$, as desired.  
\end{proof}

Now fix $K\subseteq [0,1]^d$. Let $f(x) = \dist(x,K)$. 

\begin{lemma}\label{lem:segment}
Let $x\in [0,1]^d$ and $z\in \overline{K}$ such that
$$ f(x) = |z-x|.$$
If $y$ is a point on the line segment from $x$ to $z$, then
$$ |f(y)-f(x)| = |y-x|$$
\end{lemma}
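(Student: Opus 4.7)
The plan is to use the segment decomposition $|x-z| = |x-y| + |y-z|$ (which holds because $y$ lies on the segment from $x$ to $z$) together with the $1$-Lipschitz property of $f$ established in Lemma \ref{lem:distancefunction}. This will give the inequality $|f(y)-f(x)| \le |x-y|$ from one side and the matching lower bound from the other.

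First, I would observe that since $z \in \overline{K}$ and the distance function to $K$ equals the distance function to $\overline{K}$, we have the upper bound
\begin{equation*}
f(y) = \dist(y,K) \le |y-z|.
\end{equation*}
Using the hypothesis $f(x) = |z-x|$ and the collinearity relation $|x-z| = |x-y| + |y-z|$, this gives
\begin{equation*}
f(y) \le |y-z| = |x-z| - |x-y| = f(x) - |x-y|,
\end{equation*}
and therefore $f(x) - f(y) \ge |x-y|$.

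Next, I would invoke Lemma \ref{lem:distancefunction} to get $|f(x) - f(y)| \le |x-y|$. Combined with the previous inequality, this forces $f(x) - f(y) = |x-y|$, which in particular shows $f(x) \ge f(y)$ and yields $|f(y) - f(x)| = |x-y|$, as desired.

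I do not anticipate a serious obstacle here: the whole argument is just the triangle-inequality sandwich. The only small subtlety is ensuring that the replacement of $K$ by $\overline{K}$ is harmless when estimating $f(y)$, which follows from the elementary identity $\dist(\cdot,K) = \dist(\cdot,\overline{K})$ and does not require $z$ to actually lie in $K$.
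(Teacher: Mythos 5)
Your proof is correct and takes essentially the same approach as the paper: a sandwich between the $1$-Lipschitz upper bound and the lower bound $f(x)-f(y)\ge |x-z|-|y-z|=|x-y|$ coming from $f(y)\le|y-z|$ and collinearity. Your version is slightly tidier in that you derive $f(x)\ge f(y)$ as a consequence rather than asserting it when dropping the absolute value, but the argument is the same.
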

\begin{proof}
By Lemma \ref{lem:distancefunction}, we know $f(x)=\dist(x,K)$ is 1-Lipschitz. Then, $|f(x)-f(y)|\le |x-y|$. However,
$$|f(x)-f(y)|=|\dist(x,K)-\dist(y,K)|=\dist(x,K)-\dist(y,K)\ge |x-z|-|y-z|,$$
as $\dist(y,K)=\inf\{|y-z|:z\in K\}$. Then, 
$$\dist(x,z)-\dist(y,z)=|x-z|-|y-z|=|x-y|,$$
as $y$ is on the line segment from $x$ to $z$. Thus, $|f(x)-f(y)|=|f(y)-f(x)|=|y-x|.$ 
\end{proof}
\begin{lemma} \label{cubelemma}
Let $\delta>0$ and let $Q$ be a dyadic cube in $\RR^d$ such that
$$ \HH^1_\infty(f(Q)) < \delta \side(Q).$$
Then $Q\in \QQ(K,c_d\delta)$, where $c_d=\sqrt{d}+1$.
\end{lemma}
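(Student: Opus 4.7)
The strategy has three moves: first reduce the content hypothesis to a simple oscillation bound on $f$, then locate a $K$-point inside $Q$, and finally use an inward ``coordinate clipping'' retraction to produce a $K\cap Q$-point near any given point of $Q$.

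For the first two moves, since $Q$ is connected and $f$ is continuous, $f(Q)$ is a closed interval $[f_{\min},f_{\max}]$, so Lemma~\ref{lem:intervalcontent} gives $\HH^1_\infty(f(Q)) = f_{\max}-f_{\min} < \delta s$, where $s = \side(Q)$. I would next let $y\in Q$ realize $f_{\min}$ and pick $z\in\overline{K}$ nearest to $y$; if $z\notin Q$, then the segment $[y,z]$ would cross $\partial Q$ at some $w$ with $|y-w|>0$, and Lemma~\ref{lem:segment} would give $f(w)=f_{\min}-|y-w|<f_{\min}$, contradicting the minimality of $f(y)$. So $z\in\overline{K}\cap Q$, giving $f_{\min}=0$ and $f\leq f_{\max}<\delta s$ throughout $Q$.

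For the final move, given $x\in Q$, I would retract it to a point $x'\in Q$ by clipping each coordinate to lie at least $\delta s$ from the corresponding face of $Q$: writing $Q=\prod_{i=1}^d[a_i,a_i+s]$, set $x'_i=\max\{a_i+\delta s,\min\{x_i,a_i+s-\delta s\}\}$. In the principal case $\delta\leq 1/2$ this is well-defined, and yields $\dist(x',\partial Q)\geq \delta s$ and $|x-x'|\leq \sqrt{d}\,\delta s$. Since $f(x')<\delta s$, any nearest $\overline{K}$-point of $x'$ lies in $B(x',\delta s)\subseteq Q$, and can be approximated by points of $K\cap Q$. The triangle inequality then gives
\[
\dist(x,K\cap Q)\leq |x-x'|+f(x')<\sqrt{d}\,\delta s+\delta s=c_d\delta s.
\]
The $\sqrt{d}$ factor in $c_d$ encodes the worst-case ``corner'' retraction in dimension $d$ and is the main technical issue; the regime $\delta > 1/2$, where the retraction depth is capped by $s/2$, is handled either by the diameter bound $\diam(Q)=\sqrt{d}\,s$ combined with the $K\cap Q$-point $z$ from the previous step, or by running a shallower retraction and directly bounding the resulting error.
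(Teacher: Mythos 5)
Your core strategy matches the paper's: clip each coordinate of $x\in Q$ inward to produce a $\delta s$-deep point $x'$ with $|x-x'|\le\sqrt{d}\,\delta s$, argue that the nearest $\overline{K}$-point of $x'$ lies in $B(x',\delta s)\subseteq Q$, and finish with the triangle inequality; your coordinate-clipping map is just an explicit retraction onto what the paper calls $Q'=\{x\in Q:\dist(x,\partial Q)\ge\delta\,\side(Q)\}$.

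There is, however, a genuine gap in your step 2. You take a minimizer $y$ of $f$ on $Q$ and assert that if the nearest point $z\in\overline{K}$ lies outside $Q$, then $[y,z]$ crosses $\partial Q$ at some $w$ with $|y-w|>0$. But a minimizer of $f$ over $Q$ is exactly the sort of point that sits on $\partial Q$ (it always does when $\overline{K}\cap Q=\emptyset$), and in that case $[y,z]$ can exit $Q$ immediately, so $w=y$ and $|y-w|=0$: no contradiction. Since step 4 leans on ``$f\le f_{\max}<\delta s$ throughout $Q$,'' which comes from $f_{\min}=0$, the unsupported step 2 breaks the chain. The correct move — and it is what the paper does — is to run the segment argument not at the minimizer but at a $\delta s$-deep point such as $x'$ itself: there the crossing point is forced to satisfy $|x'-w|\ge\delta s$, so Lemma~\ref{lem:segment} and $\HH^1_\infty(f(Q))<\delta s$ give the contradiction directly. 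This also makes your step 2 unnecessary, since all step 4 actually requires is $f(x')<\delta s$ for deep $x'$, not a global bound on $f|_Q$.

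On the $\delta>1/2$ regime you flag: neither patch you sketch actually closes it. The diameter bound would need $c_d\delta\ge\sqrt{d}$, which is stronger than $\delta>1/2$ once $d\ge 2$; and an $s/2$-deep retraction only forces $|f(w)-f(x')|\ge s/2<\delta s$, too weak to contradict the hypothesis. In fact the lemma as stated can fail for large $\delta$ (one can arrange $K\cap Q=\emptyset$ while still having $\HH^1_\infty(f(Q))<\delta\,\side(Q)$). This is a latent issue in the paper's own proof too — it silently assumes $Q'\ne\emptyset$, i.e.\ $\delta\le1/2$ — but is harmless in the application, where $\delta=\epsilon/c_d<1/2$.
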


\begin{proof}
Let $\delta>0$ and let $Q$ be a dyadic cube in $\RR^d$ such that $\HH^1_\infty(f(Q)) < \delta \side(Q)$. Let $Q' \subseteq Q$ be the set of points $x$ in $Q$ such that $\dist(x,\partial Q) \geq \delta\side(Q)$, where $\partial Q$ is the set of boundary points of $Q$.

\begin{claim}\label{claim:point} Let $x \in Q'$. Then there must be a point of $K$ inside the ball $B(x, \delta \side(Q))\subseteq Q$.
\end{claim}

\begin{proof}[Proof of Claim \ref{claim:point}] Let $x \in Q'$, and let $z'$ be a point in the closure of $K$ such that $$f(x) = \dist(x,K) = |x-z'|.$$

If $z'$ is not in $Q$, then let $S$ be the line segment from $z'$ to $x$, and let $y$ be the point on the boundary of $Q$ such that $y \in S$. Then by Lemma \ref{lem:segment}, $$|f(y) - f(x)| = |y - x| \geq \delta\side(Q).$$ 

Now since $Q$ is closed and bounded, it is compact. Also, since $Q$ is convex, it is connected. Then since $f(x) = \dist(x,K)$ is continuous, $f(Q) \subseteq \RR$ is also compact and connected. Then $f(Q) = [a,b]$ for some $a \leq b$. Then by Lemma \ref{lem:intervalcontent}, $$\HH^1_\infty(f(Q)) = \HH^1_\infty([a,b]) = b - a.$$ 

Then we have $$\HH^1_\infty(f(Q)) = b - a \geq |f(y) - f(x)| \geq \delta\side(Q).$$ This contradicts the assumption that $\HH^1_\infty(f(Q)) < \delta\side(Q)$. Thus it must be that $z'$ is in $Q$. Then suppose for the sake of contradiction that $z'$ is not contained in $B(x, \delta \side(Q))$. Then $$f(x) = \dist(x,K) = |x-z'| \geq \delta\side(Q),$$ which leads us to the same contradiction as above. Thus it must be that $z'$ is contained in $B(x, \delta\side(Q))$. Since $z'$ is in the closure of $K$, $B(x,\delta\side(Q))$ must contain a point of $K$.
\end{proof}
Thus for any $x \in Q'$, there is a point $z$ of $K$ inside $B(x, \delta \side(Q))$, and so $$|x-z| < \delta\side(Q) < c_d\delta\side(Q)$$.

Now consider $x \in Q$ such that $x \not\in Q'$. Then there is some $x' \in Q'$ such that $|x-x'|\leq \sqrt{d}\delta\side(Q).$ Since $x' \in Q'$, there is some $z \in K$ such that $z \in B(x', \delta\side(Q))$. Then 
$$|x-z| \leq |x-x'| + |x'-z|< \sqrt{d}\delta\side(Q) + \delta\side(Q) < c_d\delta\side(Q).$$
Thus for any $x \in Q$, there exists $z \in K \cap Q$ such that $|x-z| < c_d\delta\side(Q)$, and so $Q \in \QQ(K,c_d\delta)$.  

\end{proof}

The last lemma concerns the concept of mapping content $\HH^{n,m}_\infty$ defined above.

\begin{lemma} \label{rewritelemma}
Let $f:Q_0 \rightarrow X$ be $1$-Lipschitz and $n,m\geq 1$. Let $A\subseteq Q_0$ and suppose
$$ \HH^{n,m}_\infty(f,A) < \delta$$
Then we can write
$$ A \subseteq A' \cup \bigcup_{i} Q_i,$$
where 
\begin{enumerate}[(i)]
\item $|A'|< \sqrt{\delta}$, 
\item $Q_i$ are dyadic cubes,
\item $\HH^n_\infty(f(Q_i)) < \sqrt{\delta}\side(Q_i)^n$ for each $i$.
\end{enumerate}
\end{lemma}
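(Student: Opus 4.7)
The plan is to unpack the definition of mapping content and then split a near-optimal dyadic cover of $A$ into a ``good'' subcollection (for which the bound in (iii) already holds) and a ``bad'' subcollection (whose total volume is controlled by a Chebyshev-type estimate). Since $\HH^{n,m}_\infty(f,A) < \delta$, the infimum in the definition of mapping content lets me choose a collection $\mathcal{Q}$ of dyadic cubes in $[0,1]^{n+m}$ whose union covers $A$ and for which
$$ \sum_{Q\in\mathcal{Q}} \HH^n_\infty(f(Q))\,\side(Q)^m < \delta.$$

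Next, I would partition $\mathcal{Q} = \mathcal{Q}_{\text{good}} \sqcup \mathcal{Q}_{\text{bad}}$, where $\mathcal{Q}_{\text{good}}$ consists of those $Q$ satisfying $\HH^n_\infty(f(Q)) < \sqrt{\delta}\,\side(Q)^n$ and $\mathcal{Q}_{\text{bad}}$ consists of the rest. Enumerating $\mathcal{Q}_{\text{good}}$ as $\{Q_i\}$ makes conditions (ii) and (iii) immediate. Setting $A' = A \setminus \bigcup_i Q_i$, which is contained in $\bigcup_{Q\in\mathcal{Q}_{\text{bad}}} Q$, the volume estimate (i) follows because each bad cube satisfies
$$\HH^n_\infty(f(Q))\,\side(Q)^m \;\geq\; \sqrt{\delta}\,\side(Q)^{n+m} \;=\; \sqrt{\delta}\,|Q|,$$
so summing yields
$$ \sqrt{\delta}\sum_{Q\in\mathcal{Q}_{\text{bad}}} |Q| \;\leq\; \sum_{Q\in\mathcal{Q}_{\text{bad}}} \HH^n_\infty(f(Q))\,\side(Q)^m \;\leq\; \sum_{Q\in\mathcal{Q}} \HH^n_\infty(f(Q))\,\side(Q)^m \;<\; \delta,$$
and hence $|A'| \leq \sum_{Q\in\mathcal{Q}_{\text{bad}}}|Q| < \sqrt{\delta}$.

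There is no substantive obstacle: the statement is essentially a Markov/Chebyshev-style splitting of the covering provided by the definition of mapping content. The only minor care is preserving the strict inequality in (i), which is automatic from the strict bound $<\delta$ available via the definition of the infimum together with the identity $\sqrt{\delta}\cdot\sqrt{\delta}=\delta$. The $1$-Lipschitz hypothesis on $f$ is not actually invoked in this particular argument, although it is present in the ambient setup of the paper.
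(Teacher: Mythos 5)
Your proposal is correct and follows essentially the same argument as the paper: take a near-optimal dyadic cover from the definition of $\HH^{n,m}_\infty$, split it into cubes where $\HH^n_\infty(f(Q)) < \sqrt{\delta}\,\side(Q)^n$ and the rest, and bound the volume of the bad cubes by the Chebyshev-type inequality $\sqrt{\delta}\sum |Q| \le \sum \HH^n_\infty(f(Q))\side(Q)^m < \delta$. The only cosmetic difference is that you take $A' = A\setminus\bigcup_i Q_i$ whereas the paper takes $A'$ to be the full union of the bad cubes, and your observation that the $1$-Lipschitz hypothesis is not needed here is accurate.
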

\begin{proof}
We have $$\HH^{n,m}_\infty(f,A) = \inf_{\mathcal{Q}} \sum_{Q\in \mathcal{Q}} \HH^n_\infty(f(Q))\side(Q)^m < \delta,$$ where the infimum is taken over all collections of dyadic cubes $\mathcal{Q}$ in $Q_0$ whose union contains $A$. By definition of infimum, there exists a collection of dyadic cubes $\mathcal{R} = \{R_j\}_{j\in J}$, whose union contains $A$, such that
\begin{equation}\label{eq:Rj}
\sum_{j\in J}\HH^n_\infty(f(R_j))\side(R_j)^m < \delta.
\end{equation}
We split these cubes $R_j$ into two collections:
$$\mathcal{R}^1 = \{R_j \in \mathcal{R} : \HH^n_\infty(f(R_j)) < \sqrt{\delta}\side(R_j)^n\},$$
and
$$\mathcal{R}^2 = \{R_j \in \mathcal{R} : \HH^n_\infty(f(R_j)) \geq \sqrt{\delta}\side(R_j)^n\}.$$ $\mathcal{R}^1$ will become our collection of dyadic cubes $\{Q_i\}$. The union of cubes in $\mathcal{R}^2$ will be our set $A'$, so we want to show that $\Big|\bigcup_{R_j \in R^2}R_j\Big| < \sqrt{\delta}$. 

Let $J_2 := \{j \in J : R_j \in \mathcal{R}^2\}$. Then, using \eqref{eq:Rj}, we have $$\delta > \sum_{j \in J}\HH^n_\infty(f(R_j))\side(R_j)^m \geq \sum_{j \in J_2}\HH^n_\infty(f(R_j))\side(R_j)^m.$$ Then by the definition of our set $\mathcal{R}^2$, we have $$\delta >  \sum_{j \in J_2}\HH^n_\infty(f(R_j))\side(R_j)^m \geq \sum_{j \in J_2}\sqrt{\delta}\side(R_j)^n \side(R_j)^m =  \sqrt{\delta}\sum_{j \in J_2}|R_j| \geq \sqrt{\delta}\Big|\bigcup_{j \in J_2}R_j\Big|.$$

Thus we have $$\Big|\bigcup_{j \in J_2}R_j\Big| < \sqrt{\delta}.$$ Therefore, if we define $A'$ to be the union of the cubes in $\mathcal{R}^2$ and define $\{Q_i\}$ to be the collection of cubes in $\mathcal{R}^1$, then we can write $$A \subseteq A' \cup \bigcup_i Q_i,$$ where properties (i)-(iii) hold for $A'$ and each $Q_i$. 
\end{proof}
\section{Proofs of the main results}

\begin{proof}[Proof of Theorem \ref{thm:main}]
Take $K\subseteq[0,1]^{d}$ and $\epsilon>0$. Let $f(x)=\dist({x,K})$, which is $1$-Lipschitz by Lemma \ref{lem:distancefunction}. Applying Theorem \ref{thm:davidschul} to $f$ with $n=1$, $m=d-1$, and $\gamma=\frac{\epsilon^2}{c_d^2}$ (where $c_d=\sqrt{d}+1$ as in Lemma \ref{cubelemma}) we have that $$[0,1]^{d}=E_{1}\cup...\cup E_{M}\cup G_{0},$$
where $E_{i}$ are Hard Sard sets for $f$ and $\HH^{1,d-1}_\infty(f,G_{0})<\frac{\epsilon^2}{c_d^2}$. 

The following two claims combine to complete the proof of Theorem \ref{thm:main}.

\begin{claim}\label{claim:straight}
The Hard Sard sets $E_{i}$ are $K$-straightenable sets.
\end{claim}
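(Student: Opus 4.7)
The plan is to use the bi-Lipschitz map $g$ supplied by the Hard Sard property of $E_i$ as the straightening map itself, and then read off $\phi$ from the first coordinate. Since we applied Theorem \ref{thm:davidschul} with $n=1$ and $m=d-1$, condition (\ref{HS3}) of Definition \ref{def:HSpair} says that for points of $g(E_i)$, the value of $F = f\circ g^{-1}$ depends only on the first coordinate $x\in\RR$, and two points of $g(E_i)$ have the same $F$-value if and only if they share the same first coordinate. This is exactly the structure required by $K$-straightenability, since $S_K(r)$ is the level set $\{f=r\}$.

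More precisely, first I would define $\phi$. Fix $r$ with $S_K(r)\cap E_i\neq \emptyset$ and pick any $p\in S_K(r)\cap E_i$, writing $g(p)=(x_0,y_0)\in\RR\times\RR^{d-1}$. I would set $\phi(r) := x_0$. To see this is independent of the choice of $p$, note that for any other $p'\in S_K(r)\cap E_i$, we have $F(g(p'))=f(p')=r=f(p)=F(g(p))$, so condition (\ref{HS3}) forces the first coordinates of $g(p')$ and $g(p)$ to agree.

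Next I would verify the straightening identity (\ref{eq:sc1}). For the inclusion $g(S_K(r)\cap E_i)\subseteq (\{\phi(r)\}\times\RR^{d-1})\cap g(E_i)$, the first coordinate argument above does the job. For the reverse inclusion, take $(x_0,y)\in g(E_i)$ with $x_0=\phi(r)$, and let $q\in E_i$ with $g(q)=(x_0,y)$. Then by condition (\ref{HS3}) applied to $g(p)=(x_0,y_0)$ and $(x_0,y)$, we get $F(x_0,y)=F(x_0,y_0)=r$, i.e.\ $f(q)=r$, so $q\in S_K(r)\cap E_i$.

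Finally I would show $\phi$ is injective. If $\phi(r)=\phi(r')=x_0$ for two values $r,r'$ with $S_K(r)\cap E_i$ and $S_K(r')\cap E_i$ both nonempty, pick $p\in S_K(r)\cap E_i$ and $p'\in S_K(r')\cap E_i$ with $g(p),g(p')$ both having first coordinate $x_0$; then condition (\ref{HS3}) gives $r=F(g(p))=F(g(p'))=r'$. The bi-Lipschitz constant of $g$ as a map $\RR^d\to\RR^d$ is inherited directly from the Hard Sard data. I do not expect a genuine obstacle here: the entire argument is a careful unpacking of the equivalence in condition (\ref{HS3}), and the only subtle point is checking that $\phi$ is well-defined, which is immediate from that condition.
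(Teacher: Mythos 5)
Your proposal is correct and follows essentially the same route as the paper: use the bi-Lipschitz map $g$ from the Hard Sard property directly as the straightening map, read off $\phi(r)$ as the common first coordinate of $g(S_K(r)\cap E_i)$, and derive well-definedness, the straightening identity \eqref{eq:sc1}, and injectivity of $\phi$ from condition (\ref{HS3}) together with the fact that $S_K(r)$ is the level set $\{f=r\}$. If anything, your injectivity argument is phrased slightly more carefully than the paper's (which informally writes $g(p)=(x,y)=g(p')$ when only equal first coordinates are meant), but the underlying reasoning is identical.
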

\begin{proof}[Proof of Claim \ref{claim:straight}]
Throughout this proof, we write points of $\RR^d$ as $(x,y)$, where $x\in \RR$ and $y\in\RR^{d-1}$. Let $E=E_i$ for some $i\in\{1, \dots, M\}$.

By Definition \ref{def:HSpair}, there is a bi-Lipschitz map $g:\RR^d \to \RR^d$ such that if $F=f\circ g^{-1}$, then for $(x,y),(x^{'},y^{'})\in g(E)$, $F(x,y)=F(x^{'},y^{'})$ if and only if $x=x^{'}$.

Now, take any $r$ such that $S_{K}(r)$ intersects $E$ and consider any point $p\in S_{K}(r)\cap g(E)$. Then $g(p)=(x,y)$, and if any other point $q\in S_{k}(r)\cap g(E)$, then $f(p)=f(q)=r$, which implies that $F(g(q))=F(g(p))$. Then, $g(q)\in (\{x\}\times\RR^{d-1})\cap g(E)$. Hence, $g(S_{K}(r))\cap g(E)\subseteq (\{x\}\times\RR^{d-1})\cap g(E)$.

Now take any point $(x,y')\in (\{x\}\times\RR^{d-1})\cap g(E)$, where $(x,y^{'})=g(p')$, for some $p'\in E$. Then $F(x,y')=F(x,y)=F(g(p))=f(p)=r=f(p')=\dist(p^{'},K)$. Thus $p'\in S_{K}(r)\cap g(E)$, and it follows that $(x,y')=g(p')\in g(S_{K}(r))\cap g(E)$. Thus $(\{x\}\times\RR^{d-1})\cap g(E)\subseteq g(S_{K}(r))\cap g(E)$, yielding the desired equality.

Lastly, define $$\phi:\{r\ge0:S_{K}(r)\cup E\neq \emptyset\}\to \RR$$
so that $\phi(r)$ is equal to the first coordinate $x$ of all points $(x,y)\in g(S_{K}(r)\cap E)$. (Note that all such points share a common first coordinate by our work above.)

By definition, $g(S_{K}(r)\cap E)=(\{\phi(r)\}\times\RR^{d-1})\cap g(E)$. Now suppose $\phi(r)=\phi(r^{'})$. Then there are points $p\in S_{k}(r)\cap E$ and $p^{'}\in S_{K}(r^{'})\cap E$ such that $g(p)=(x,y)=g(p^{'})$. Thus $F(g(p))=F(g(p^{'}))$, which implies that $f(p)=f(p^{'})$, and therefore $r=r^{'}$. Hence, $\phi$ is injective. 
\end{proof}

\begin{claim}\label{claim:garbage}
The set $G_0$ is contained in $G\cup D_\epsilon(K)$, where $G$ is a subset of $[0,1]^d$ with $|G|<\epsilon$.
\end{claim}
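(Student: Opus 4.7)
The plan is to combine the rewrite lemma (Lemma \ref{rewritelemma}) and the cube lemma (Lemma \ref{cubelemma}) in a direct way, using the fact that the constant $\gamma = \epsilon^2/c_d^2$ was chosen precisely so that the square root unpacks to $\epsilon/c_d$, which then cancels against the factor $c_d$ that appears in Lemma \ref{cubelemma}.

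First I would invoke Lemma \ref{rewritelemma} with $n=1$, $m=d-1$, $A = G_0$, and $\delta = \epsilon^2/c_d^2$. Since $\HH^{1,d-1}_\infty(f,G_0) < \epsilon^2/c_d^2$ by our application of Theorem \ref{thm:davidschul}, the lemma yields a decomposition
$$ G_0 \subseteq A' \cup \bigcup_{i} Q_i,$$
with $|A'| < \sqrt{\delta} = \epsilon/c_d$ and dyadic cubes $Q_i$ satisfying
$$ \HH^1_\infty(f(Q_i)) < \frac{\epsilon}{c_d}\side(Q_i).$$

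Next I would apply Lemma \ref{cubelemma} to each $Q_i$ with the parameter $\delta$ of that lemma taken to be $\epsilon/c_d$. The conclusion is that $Q_i \in \QQ(K, c_d \cdot \epsilon/c_d) = \QQ(K, \epsilon)$, and hence $Q_i \subseteq D_\epsilon(K)$ for every $i$. Therefore $\bigcup_i Q_i \subseteq D_\epsilon(K)$.

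Finally, setting $G = A' \cap [0,1]^d$, we obtain $G_0 \subseteq G \cup D_\epsilon(K)$ with $|G| \leq |A'| < \epsilon/c_d < \epsilon$, as desired. There is no real obstacle here: the bookkeeping of constants was already front-loaded into the choice of $\gamma$ in the proof of Theorem \ref{thm:main}, so the present claim reduces to a mechanical chaining of the two lemmas.
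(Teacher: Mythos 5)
Your proof is correct and takes essentially the same route as the paper: apply Lemma \ref{rewritelemma} to split $G_0$ into a small-volume piece and a union of dyadic cubes $Q_i$ with $\HH^1_\infty(f(Q_i)) < (\epsilon/c_d)\side(Q_i)$, then feed each $Q_i$ into Lemma \ref{cubelemma} to conclude $Q_i \in \QQ(K,\epsilon)$, so that $\bigcup_i Q_i \subseteq D_\epsilon(K)$. The only (welcome) cosmetic difference is that you explicitly intersect $A'$ with $[0,1]^d$ to ensure $G\subseteq[0,1]^d$; the paper leaves this implicit.
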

\begin{proof}[Proof of Claim \ref{claim:garbage}]
Applying Lemma \ref{rewritelemma}, we can write $$G_0 \subseteq G \cup \bigcup_i Q_i,$$ where $|G| < \frac{\epsilon}{c_d}$ and $Q_i$ are dyadic cubes with $\HH_\infty^1(f(Q_i)) < \frac{\epsilon}{c_d} \side (Q_i)$ for each $i$. Then by Lemma \ref{cubelemma}, we have $Q_i \in \QQ(K,\epsilon)$ for every $i$, and so $Q_i \in \QQ(K,\epsilon)$ for all $i$. Then we have $$\bigcup_i Q_i \subseteq D_\epsilon(K),$$ and so $$G_0 \subseteq G \cup \bigcup_i Q_i \subseteq G \cup D_\epsilon(K),$$ where $|G| < \frac{\epsilon}{c_d} < \epsilon$  
\end{proof}

\end{proof}

\begin{proof}[Proof of Corollary \ref{cor:porous}]
Let $K \subseteq [0,1]^d$ be a porous set with constant $c$, and let $0 < \epsilon < c/2$. Let $Q$ be any dyadic cube in $[0,1]^d$. Let $p$ be the point in the center of $Q$, and let $r := \frac{1}{2}\side(Q)$. Consider the ball $B(p,r) \subseteq Q$. Since $K$ is porous, there exists some point $q$ in $B(p,r)$ such that $$B(q,cr) \subseteq B(p,r)$$ and $$B(q,c r) \cap K = \emptyset.$$ Then for every $z \in K$, $$|q-z| \geq cr = \frac{c}{2}\side(Q) > \epsilon\side(Q).$$ Thus $Q \notin \QQ(K,\epsilon)$. Since this is true for every dyadic cube in $[0,1]^d$, $$\QQ(K,\epsilon) = \emptyset,$$ and so $$D_\epsilon(K) = \emptyset.$$ Then by Theorem \ref{thm:main}, we can write $$[0,1]^d = E_1 \cup ...\cup E_M \cup D_\epsilon (K) \cup G = E_1 \cup ...\cup E_M \cup G,$$ where each $E_i$ is $K$-straightenable, $|G|<\epsilon$ and the number of straightenable sets $M$ and the associated bi-Lipschitz constants depend only on $\epsilon$ and $d$, and not on the set $K$.   
\end{proof}

\bibliographystyle{plain} 
\bibliography{distancespherebib}

\end{document}